\theoremstyle{plain}
\newtheorem{theorem}{Theorem}[section]
\newtheorem{defin}[theorem]{Definition}
\newtheorem{notation}[theorem]{Notation}
\newtheorem{prop}[theorem]{Proposition}
\newtheorem{lemma}[theorem]{Lemma}
\newtheorem{remark}[theorem]{Remark}
\newcommand{\lb}{\bar{\lambda}}
\numberwithin{equation}{section}
\begin{document}
\title[Exact Multi-Point Correlations in the SHE
for Strictly Sublinear Coordinates]{Exact Multi-Point Correlations in the Stochastic Heat Equation
for Strictly Sublinear Coordinates}
\author{Pierre Yves Gaudreau Lamarre}
\address[Pierre Yves Gaudreau Lamarre]{Mathematics Department, Syracuse University, Syracuse, NY 13244}
\email{pgaudrea@syr.edu}
\author{Yier Lin}
\address[Yier Lin]{Department of Statistics, University of Chicago, Chicago, IL 60637}
\email{ylin10@uchicago.edu}

\begin{abstract}
We consider the Stochastic Heat Equation (SHE)
in $(1+1)$ dimensions with delta Dirac initial data and spacetime white noise.
We prove exact large-time asymptotics for multi-point correlations of the SHE for
strictly sublinear space coordinates. The sublinear condition is
optimal, in the sense that different asymptotics are known to occur when the space coordinates
grow linearly \cite[Theorem 1.1]{Lin}.
Lastly, a notable feature of our result is that it confirms the connection between multi-point correlations in the SHE
and the ground state of the Hamiltonian of the delta-Bose gas.
\end{abstract}

\maketitle

\section{Introduction}

\subsection{Background}

In this note, we study the Stochastic Heat Equation (SHE)
\begin{equation}
\label{Equation: SHE}
\partial_t  Z = \tfrac{1}{2} \partial_{xx} Z + \xi Z,
\qquad Z=Z(t,x),~(t,x)\in\mathbb R_+\times\mathbb R
\end{equation}
starting from Dirac delta initial data $Z(0, \cdot) = \delta(\cdot)$,
where $\xi=\xi(t,x)$ is a spacetime white noise.
Informally, $\xi$ is defined as the centered Gaussian process with covariance
\[\mathbb E[\xi(s,x)\xi(t,y)]=\delta(t-s)\delta(y-x).\]
See, e.g., \cite[Sections 2.1--2.6]{Quastel} for a survey of the solution theory of this object.
The SHE is an object of fundamental importance in stochastic analysis and mathematical physics
due to its connections with various physical models, such as random polymers
and the KPZ equation; we refer to
\cite{Cor12, Quastel} for a detailed exposition of these (and more) connections. In this note, we are interested in the occurrence of intermittency in
$Z(t,\cdot)$ for large $t$. Informally, intermittency refers to the observation that, as $t\to\infty$,
the SHE's solution tends to concentrate most of its mass in tall and and narrow peaks separated by deep valleys.

The rigorous study of intermittency in the SHE began with the pair of
articles \cite{BertiniCancrini,BertiniGiacomin}, both of which followed the methodology
to prove intermittency outlined in \cite[Page 229]{Molchanov}: On the one hand, in \cite[(2.40)]{BertiniCancrini} the authors
obtained an explicit expression for the $n$-moment Lyapunov exponents
\begin{equation*}
\mathfrak L_n:=\lim_{t\to\infty}\frac{\log\mathbb E[Z(t,x)^n]}{t}
\end{equation*}
for all $n\in \mathbb{Z}_{\geq 1}$ under the assumption of a constant initial condition $Z(0,\cdot)=c>0$;
namely
\begin{equation}
\label{Equation: SHE Moment Lyapunov}
\mathfrak L_n=\frac{ n(n^2-1)}{24}.
\end{equation}
Thanks to a simple ergodic theorem/Markov's inequality argument outlined in \cite[(2.28)--(2.38)]{BertiniCancrini},
the fact that the moment Lyapunov exponents in \eqref{Equation: SHE Moment Lyapunov} satisfy $\mathfrak L_1<\mathfrak L_2/2<\mathfrak L_3/3<\cdots$
implies that for any $\alpha>0$, there exist small islands (which occupy an exponentially small proportion
of space that can be quantified using the exponents $\mathfrak L_n$) on which $Z(t,\cdot)$ exceeds $e^{\alpha t}$.

On the other hand, in \cite[Theorem 1.1]{BertiniGiacomin}, the authors show that 
for every smooth and compactly supported $\phi:\mathbb R\to\mathbb R$, one has
\begin{equation}
\label{Equation: SHE Sample Lyapunov}
\lim_{t\to\infty}\frac1t\int_{-\infty}^\infty \big(\log Z(t,x)\big)\phi(x)~dx=-\frac{1}{24}\int_{-\infty}^\infty\phi(x)~dx\qquad\text{in $L^2$},
\end{equation}
under the assumption that $Z(0,x)=e^{B(x)}$ where $B$ is a two-sided Brownian motion independent of $\xi$. 
Later, (1.4) is proved for more general initial data and $\phi = \delta(\cdot)$, see for example \cite{AmirCorwinQuastel}.
When combined with the moment Lyapunov exponents, the additional insight provided by the sample Lyapunov exponent
in \eqref{Equation: SHE Sample Lyapunov} is that the intermittent peaks only persist for a finite amount of time (since the
SHE's solution decays exponentially). This latter observation is in stark contrast with the intermittency
phenomenon observed in the parabolic Anderson model with time-independent noises; see, e.g., the monograph \cite{Konig}.

In recent years, the results in \cite{BertiniCancrini,BertiniGiacomin} were improved and extended in myriad ways.
For instance, \cite{DasTsai,GhosalLin} generalized \eqref{Equation: SHE Moment Lyapunov} to
all $n>0$ and an extensive class of initial conditions.
For the specific purposes of this paper, 
one important recent development
came from \cite{Chen}. More specifically, as shown in \cite[(3.2) and (4.1)]{Chen}, the Lyapunov exponents
of the SHE for integer powers admit the following variational interpretation:
\begin{equation}
\label{Equation: SHE Variational Lyapunov}
\mathfrak L_n=-\inf_{f\in\mathcal F_n}\left\{\frac12\int_{\mathbb R^n}|\nabla f(x)|^2~dx-\sum_{1\leq i<j\leq n}\int_{\mathbb R^n}\delta(x_i-x_j)f(x)^2~dx\right\},\qquad n\in \mathbb{Z}_{\geq 1},
\end{equation}
where $x=(x_1,\ldots,x_n)$, and $\mathcal F_n$ denotes the space of smooth rapidly-decreasing functions $f:\mathbb R^n\to\mathbb R$
such that $\|f\|_2=1$.
When combined with \eqref{Equation: SHE Moment Lyapunov}, the
equality in \eqref{Equation: SHE Variational Lyapunov} provided a rigorous proof of the fact that the ground state energy
of the Schr\"odinger operator on $\mathbb R^n$ defined as
\[H_n:=-\tfrac12\Delta-\sum_{1\leq i<j\leq n}\delta(x_i-x_j)\]
is equal to $-\mathfrak L_n=-\frac{ n(n^2-1)}{24}$. This confirmed physical predictions
made in \cite{Kardar,LiebLiniger}, which relied on the computation of the following ``ground state\footnote{Calling
$\Psi_n$ a ground state is an abuse of terminology, since although one can convincingly argue that $H_n\Psi_n=\frac{ n(n^2-1)}{24}\Psi_n$,
the norm $\|\Psi_n\|_2$ is infinite.}"
for $H_n$ using the Bethe ansatz:
\begin{equation}
\label{Equation: Ground State}
\Psi_n(x_1, \ldots, x_n) := \exp\left(\sum_{1 \leq i < j \leq n} -\frac{|x_i - x_j|}{2}\right).
\end{equation}

\subsection{Main Result}

In this note, we are interested in furthering the insights on the finer details of the geometry of
intermittent peaks hinted at by \eqref{Equation: SHE Variational Lyapunov} and \eqref{Equation: Ground State}.
More specifically, the proof of \eqref{Equation: SHE Variational Lyapunov} in \cite[Section 4]{Chen}
(most notably, its connection with Schr\"odinger semigroup theory \cite[Theorem 4.1.6]{ChenBook})
strongly suggests the following informal principle: The atypical configurations in $x\mapsto Z(t,x)$ that provide the main contributions to
$\mathbb E[Z(t,x)^n]$'s size as $t\to\infty$ should be closely related to the ground state $\Psi_n$.

Further evidence of this connection is provided by the fact that the multi-point correlation functions of the
SHE, which we define as
\[u_n(t,x)=\mathbb{E}\left[\prod_{i = 1}^n Z(t,x_i)\right],\qquad x=(x_1,\ldots,x_n)\in\mathbb R^n,\]
solve the delta-Bose gas PDEs
\[\partial_tu(t,x)=-H_nu(t,x),\qquad u(0,\cdot)=\delta(\cdot);\]
see \cite[Proposition 5.4.8]{borodin2014macdonald} and \cite[Corollary 1.7]{Nica},
and also \cite[Proposition 6.2.3]{borodin2014macdonald}.
An informal spectral expansion based on this fact suggests that if $t$ is large and $x_1,\ldots,x_n$
are all much smaller than $t$, then
\begin{align}
\label{Equation: Leading Eigenvalue Heuristic}
u_n(t,x)\approx e^{t\mathfrak L_n}\Psi_n(x).
\end{align}

\begin{remark}
\label{Remark: Strictly Sublinear is Optimal}
It is natural to expect that \eqref{Equation: Leading Eigenvalue Heuristic}
should only hold when the $x_i$'s are much smaller than $t$,
since otherwise we see from \eqref{Equation: Ground State}
that the product $e^{t\mathfrak L_n}\Psi_n(x)$ could
be of order $o(e^{t\mathfrak L_n})$. In such a case,
one expects that the leading order asymptotics of $u_n(t,x)$ are
not only explained by $H_n$'s ground state.
\end{remark}

In this context, our main result formalizes these heuristics as follows:

\begin{theorem}\label{thm:main}
Let $Z$ be as in \eqref{Equation: SHE} with $Z(0,\cdot)=\delta(\cdot)$.
Let $n\in \mathbb{Z}_{\geq 1}$, and for every $1\leq i\leq n$, let $\big(x_i(t)\big)_{t\geq0}$ be a strictly sublinear sequence of real numbers,
in the sense that $x_i(t)=o(t)$
as $t\to\infty$. It holds that
\begin{align}
\label{Equation: Main}
\mathbb{E}\left[\prod_{i = 1}^n Z\big(t,x_i(t)\big)\right]=\frac{(n-1)! \sqrt{2\pi}}{\sqrt{nt}}\exp\left(\mathfrak L_nt\right)\Psi_n\big(x_1(t), \ldots,x_n(t)\big)\big(1+o(1)\big)
\qquad\text{as }t\to\infty,
\end{align}
where $\mathfrak L_n$ is the Lyapunov exponent in \eqref{Equation: SHE Moment Lyapunov}
and
$\Psi_n$ is the ground state in \eqref{Equation: Ground State}.
\end{theorem}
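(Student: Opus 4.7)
My strategy is to exploit the exact Bethe-ansatz contour integral formula for $u_n(t,x)$ established in \cite[Proposition 5.4.8]{borodin2014macdonald},
\[
u_n(t,x)=\int\cdots\int \prod_{i=1}^n\frac{dz_i}{2\pi\mathrm{i}}\,\prod_{1\leq A<B\leq n}\frac{z_A-z_B}{z_A-z_B-1}\,\prod_{i=1}^n\exp\bigl(\tfrac12 tz_i^2+x_i z_i\bigr),
\]
on nested vertical contours with $\mathrm{Re}(z_A)-\mathrm{Re}(z_B)>1$ for $A<B$. Collapsing the contours onto a single vertical line picks up residues at the poles $z_A=z_B+1$, producing a decomposition into Bethe ``string'' configurations,
\[
u_n(t,x)=\sum_{\substack{s\geq 1,\,n_1+\cdots+n_s=n\\n_\alpha\geq 1}}\frac{1}{s!}\,J_{n_1,\ldots,n_s}(t,x).
\]
The plan is to show that the single-string piece $J_{(n)}$ produces the right-hand side of \eqref{Equation: Main} and that every multi-string piece $J_{(n_1,\ldots,n_s)}$ with $s\geq 2$ is $o(1)$ relative to $J_{(n)}$.

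To evaluate $J_{(n)}$, take sequential residues at $z_j=z_{j+1}+1$; setting $\lambda:=z_n+(n-1)/2$ lands us on the string $z_j=\lambda+(n+1-2j)/2$. A direct computation gives $\tfrac12 t\sum_jz_j^2=\tfrac12 nt\lambda^2+t\mathfrak L_n$, the Vandermonde-type product $\prod_{A<B,\,B\neq A+1}\frac{z_A-z_B}{z_A-z_B-1}$ telescopes to $(n-1)!$, and the exponential $\exp(\sum_jx_jz_j)$ splits into $e^{\lambda\sum_jx_j}$ times the spatial weight $\exp(\sum_jx_j(n+1-2j)/2)$, which for ordered coordinates coincides exactly with $\Psi_n(x)$; symmetrization across the $n!$ orderings of the $x_i$'s reassembles $\Psi_n(x)$ for general configurations. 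Gaussian evaluation of the resulting $\lambda$-integral then yields the prefactor $\sqrt{2\pi/(nt)}$ together with a center-of-mass Gaussian $\exp(-(\sum_jx_j)^2/(2nt))$ which, under the strictly sublinear hypothesis, contributes $1+o(1)$.

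The technical heart of the proof is to show that every multi-string term is $o(1)\cdot J_{(n)}$. The algebraic ingredient is the strict superadditivity
\[
\sum_{\alpha=1}^s\frac{n_\alpha(n_\alpha^2-1)}{24}<\frac{n(n^2-1)}{24}=\mathfrak L_n\qquad\text{for }s\geq 2,
\]
a consequence of convexity of $m\mapsto m^3-m$ on $\mathbb Z_{\geq 1}$, which supplies a uniform exponential-in-$t$ gap in favor of the single string. The analytic ingredient is an $s$-dimensional Gaussian/saddle-point evaluation of $J_{n_1,\ldots,n_s}$, which bounds it by $t^{-s/2}\exp\bigl(t\sum_\alpha n_\alpha(n_\alpha^2-1)/24\bigr)$ times an explicit cluster wavefunction $\widetilde\Psi_{n_1,\ldots,n_s}(x)$ of exponential type.

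I expect the main obstacle to lie in making these comparisons \emph{uniform} across the strictly sublinear window $x_i(t)=o(t)$: since $\Psi_n(x)$ can itself be as small as $\exp(-c\,o(t))$, the multi-string errors must beat $\Psi_n(x)$ rather than merely $e^{t\mathfrak L_n}$. I would address this by choosing the rapidity contours in each $J_{n_1,\ldots,n_s}$ so that $\widetilde\Psi_{n_1,\ldots,n_s}(x)$ matches $\Psi_n(x)$ up to a controlled multiplicative factor, at which point the strict-superadditivity gap in $t$ absorbs any residual $\Psi_n(x)/\widetilde\Psi_{n_1,\ldots,n_s}(x)$ quotient so long as $x_i(t)=o(t)$. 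Combining the explicit single-string asymptotic with these uniform multi-string estimates then yields \eqref{Equation: Main}.
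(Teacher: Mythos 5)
Your overall architecture matches the paper's: an exact nested-contour formula from \cite{borodin2014macdonald}, collapse onto a single vertical line to obtain a string (cluster) decomposition indexed by partitions of $n$, show the single-string term gives the asymptotic, and show every other term is $o(\cdot)$ of it.

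However, there is a genuine gap at the very first step. The string decomposition you write down,
\begin{equation*}
u_n(t,x)=\sum_{\substack{s\geq 1,\ n_1+\cdots+n_s=n\\n_\alpha\geq 1}}\frac{1}{s!}\,J_{n_1,\ldots,n_s}(t,x),
\end{equation*}
is the contour-collapsing identity for \emph{symmetric} integrands (it is essentially \cite[Proposition 3.2.1]{borodin2014macdonald}). The integrand here contains $\prod_i e^{x_i(t)\,z_i}$, which is not symmetric in the $z_i$'s whenever the $x_i(t)$'s differ — and the multi-point, distinct-coordinates setting is precisely the point of the theorem. The main technical labor of the paper is establishing an asymmetric analogue of this collapsing identity (Lemma \ref{Lemma: Contour Formula 2}, giving Proposition \ref{prop:contexpan}), where the usual symmetric weight is replaced by the function $E^F(\vec w \circ \lambda)$ that sums over all $n!$ orderings with sign-corrected cross ratios. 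Its proof is a nontrivial induction on $n$ involving residue bookkeeping when a single contour is dragged across the others and a bijection between marked partitions of $n$ and of $n+1$. Your proposal treats the residue expansion as a routine step; it is not, and the symmetric formula you cite would give a wrong answer.

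Two smaller points. First, your strict superadditivity $\sum_\alpha \mathfrak L_{n_\alpha} < \mathfrak L_n$ is the right intuition but is not literally the quantity that appears. If you recenter each string's rapidity independently you'd get this, but then you'd have to compare the cluster weight $\widetilde\Psi_{n_1,\ldots,n_s}$ with $\Psi_n$ — the difficulty you flag. The paper sidesteps this by keeping a \emph{single} horizontal offset $\theta$ (with a tiny $\varepsilon$-stagger to avoid singularities in the Cauchy determinant) for all strings; optimizing over $\theta$ yields the exponent $\frac{1}{24}\bigl(4\sum_k\lambda_k^3-\tfrac{3}{n}(\sum_k\lambda_k^2)^2-n\bigr)$, whose strict submaximality for $\lambda\neq(n)$ is proved via the $\ell^p\leq\ell^2$ inequality and a one-variable calculus bound. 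Second, the ``uniformity across the sublinear window'' concern in your final paragraph is simpler than you make it: since $x_i(t)=o(t)$, both $\Psi_n(x(t))$ and all spatial factors in the multi-string bounds are $e^{o(t)}$, so any strict exponential-in-$t$ gap between the single- and multi-string Gaussian rates already dominates; there is no need to tune contours so that $\widetilde\Psi$ matches $\Psi_n$.
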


Following-up on Remark \ref{Remark: Strictly Sublinear is Optimal}, one can guarantee
that the sublinearity assumptions on $x_i(t)$ in Theorem \ref{thm:main} are optimal thanks to \cite[Theorem 1.1]{Lin}.
Therein, exact asymptotics are derived for multi-point lyapunov exponents of the form
\[\lim_{t\to\infty}\frac1t\log\mathbb E\left[\prod_{i=1}^nZ(t,tx_i)\right],\qquad x_1,\ldots,x_n\in\mathbb R.\]
An examination of this result reveals that the scaling of $t$ in the space coordinates $x_i$ induce a rather different limiting behavior
that does not involve $\Psi_n$,
which is explained by an altogether different mechanism from the asymptotic
in Theorem \ref{thm:main} (i.e., \eqref{Equation: Leading Eigenvalue Heuristic}).

Finally, we note that the use of asymptotic spatial correlations to investigate
finer details of the geometry of intermittency (particularly the optimizers of the variational
problems that arise in the moment asymptotics) is not new to this paper. Notably, the work
\cite{GartnerHollander} proved a similar result for the discrete parabolic Anderson model
with time-independent noises.

\section{Outline of Proof}

In this section, we provide a bird's-eye view of the proof of Theorem \ref{thm:main};
the proofs of several technical propositions stated here are provided in later sections.
For this purpose, going forward, we assume that $n\in \mathbb{Z}_{\geq 1}$ and $x_1(t), \ldots, x_n(t)=o(t)$ are fixed.

The first and main technical ingredient in our proof consists of a contour integral representation
for the mixed moment $\mathbb{E}\big[\prod_{i = 1}^n Z\big(t,x_i(t)\big)\big]$, which allows to conveniently isolate the
leading order contribution of the latter.
Before we state this result, we need to introduce some notations:

\begin{defin}
 We say that a vector $\lambda$ is a partition of $n$, denoted by $\lambda \vdash n$, if $\lambda = (\lambda_1, \ldots, \lambda_\ell)$ with $\ell\geq1$,
 $\lambda_1, \ldots, \lambda_\ell \in \mathbb{Z}_{\geq 1}$, $\lambda_1 \geq \dots \geq \lambda_\ell$, and $\sum_{k = 1}^\ell \lambda_k = n$.
 
 Given $\lambda=(\lambda_1,\ldots,\lambda_\ell)\vdash n$, let
  $\ell(\lambda) := \ell$ denote the length of the partition,
 and define the combinatorial constant $m(\lambda) := m_1 ! m_2! \cdots m_n!$, where for every $1\leq i\leq n$, $m_i$ is the number of times that $i$ appears in $\lambda$.
 Finally, given a complex vector $\vec{w} =(w_1,\ldots,w_{\ell(\lambda)})\in\mathbb C^{\ell(\lambda)}$, we denote
\begin{equation}
\label{Equation: w circ lambda}
\vec w \circ \lambda := (w_1, w_1 + 1, \ldots, w_1 + \lambda_1 -1, w_2, w_2 + 1, \ldots, w_2 + \lambda_2 - 1, \ldots, w_{\ell(\lambda)}, \ldots,  w_{\ell(\lambda)} + \lambda_{\ell(\lambda)} - 1).
\end{equation}
Finally, let $S_n$ denote the set of permutations of $\{1,\ldots,n\}$.
\end{defin}

\begin{defin}
\label{Def: Et and nut}
For every $t\geq0$, let $x_{(1)}(t),\ldots,x_{(n)}(t)$ denote the coordinates $x_i(t)$ ordered in
nondecreasing order, i.e., $x_{(1)}(t)\leq x_{(2)}(t)\leq\cdots\leq x_{(n)}(t)$.
For every $t>0$, define the complex function $E_t:\mathbb C^n\to\mathbb C$ as
\begin{align}
\label{eq:E}
E_{t} (z_1, \ldots, z_n) 
&:= \sum_{\sigma \in S_n} \prod_{1 \leq 
	B < A \leq n} \frac{z_{\sigma(A)} - z_{\sigma (B)} - 1}{z_{\sigma(A)} - z_{\sigma(B)}} \exp\left(\sum_{i  =1}^n\left(\frac{t}{2} z_{\sigma(i)}^2 + x_{(i)}(t) z_{\sigma(i)}\right)\right).
\end{align}
Then, for every $\lambda\vdash n$, define
\begin{equation}\label{eq:nu}
\nu_\lambda(t):= \oint^{\otimes \ell(\lambda)}_{\gamma} \frac{1}{m(\lambda)} \det\Big[\frac{1}{w_i + \lambda_i  -w_j}\Big]_{i, j = 1}^{\ell(\lambda)}  E_t(\vec w\circ \lambda)\prod_{i = 1}^{\ell(\lb)} \frac{dw_i}{2\pi \mathbf{i}},
\end{equation}
where the contour $\gamma$ is given by $\theta + \mathbf{i} \mathbb{R}$ with arbitrarily fixed $\theta \in \mathbb{R}$,
and $\oint_\gamma^{\otimes\ell(\lambda)}$ denotes an $\ell(\lambda)$-fold contour integral on the same contour $\gamma$.
\end{defin}

Our main technical result is as follows:

\begin{prop}\label{prop:contexpan}
It holds that
\begin{equation}\label{eq:SHEcontour}
\mathbb{E}\left[\prod_{i = 1}^n Z\big(t,x_i(t)\big)\right] = 
\sum_{\lambda \vdash n} \nu_\lambda(t).
\end{equation}
\end{prop}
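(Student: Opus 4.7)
The plan is to start from a Bethe-ansatz nested-contour formula for the $n$-point function of the SHE with delta initial data, and then reduce it to the claimed sum over partitions via residue calculus. Concretely, the mixed moment, as a solution of the delta-Bose gas PDE with delta-at-origin initial data referenced in the introduction, admits (cf.\ \cite{borodin2014macdonald,Nica}) a representation of the form
\begin{equation*}
\mathbb{E}\Big[\prod_{i=1}^n Z\big(t,x_i(t)\big)\Big] = \frac{1}{n!}\int_{\alpha_1+\mathbf{i}\mathbb{R}}\cdots\int_{\alpha_n+\mathbf{i}\mathbb{R}} E_t(z_1,\ldots,z_n)\,\prod_{i=1}^n \frac{dz_i}{2\pi\mathbf{i}},
\end{equation*}
valid whenever the real parts of the contours satisfy the strict nesting $\alpha_1 > \alpha_2+1 > \cdots > \alpha_n+(n-1)$. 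The Gaussian factor $\exp(\tfrac{t}{2}z^2)$ hidden inside $E_t$ provides decay of order $\exp(-ty^2/2)$ along any vertical line $z=\theta+\mathbf{i}y$, so the integral is absolutely convergent and all subsequent contour deformations will be justified without boundary contributions.

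The main step is to push every contour down to a single common line $\gamma = \theta + \mathbf{i}\mathbb{R}$. In doing so, we cross simple poles at the loci $z_{\sigma(A)} = z_{\sigma(B)} + 1$ (for $B<A$) coming from the denominators $(z_{\sigma(A)}-z_{\sigma(B)})$ in each summand of $E_t$. An iteration of this process organizes the $n$ variables into \emph{strings} $\{w,\,w+1,\,\ldots,\,w+k-1\}$, that is, arithmetic progressions with unit spacing parametrized by a single free variable $w$ on $\gamma$. The multiset of string lengths is a partition $\lambda=(\lambda_1,\ldots,\lambda_{\ell(\lambda)}) \vdash n$; this produces the outer sum in \eqref{eq:SHEcontour} and reduces the number of surviving integration variables from $n$ to $\ell(\lambda)$, as in \eqref{eq:nu}.

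Next, I would evaluate the residue corresponding to a fixed $\lambda$. Within each string, the poles and residue denominators collapse via a telescoping product of unit shifts, yielding an explicit constant that is absorbed into the normalization. The residual interaction between two distinct strings anchored at $w_i,w_j$ with lengths $\lambda_i,\lambda_j$ consolidates (after cancellation of the unit-shift Bethe factors inside each string) into the Cauchy-like factor $1/(w_i+\lambda_i - w_j)$; antisymmetrizing over the $\ell(\lambda)!$ orderings of the strings converts the product of these factors into the determinant $\det[1/(w_i+\lambda_i-w_j)]_{i,j=1}^{\ell(\lambda)}$ in \eqref{eq:nu}. The remaining evaluation $E_t(\vec w\circ\lambda)$ packages the surviving Gaussian and linear exponentials after the arithmetic-progression constraints are imposed, and the $\sum_{\sigma\in S_n}$ inside $E_t$ preserves the correct symmetric assignment of the sorted coordinates $x_{(i)}(t)$ across all choices of which original variable sat in which slot of the string.

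The main obstacle is the combinatorial bookkeeping. Two points require care: (i) verifying that only string-type residue configurations contribute — other residue patterns either lie outside the region swept by the contour deformation or cancel through the manifest permutation symmetry of the summation defining $E_t$ — and (ii) tracking how the numerical constants from the residues, the $1/n!$ from the initial formula, and the overcounting arising when several strings have equal length combine exactly into the prefactor $1/m(\lambda)$ of \eqref{eq:nu}. The contour deformation itself is unproblematic thanks to the Gaussian decay, so the entire argument reduces to a careful but essentially routine residue calculation once the nested formula is in hand.
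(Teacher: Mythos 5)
Your starting nested-contour formula is not the right one, and this undermines the rest of the argument. The actual representation (Lemma~\ref{Lemma: Contour Formula 1}, from \cite[Proposition 6.2.3]{borodin2014macdonald}) has the \emph{non-symmetric} integrand $\bigl(\prod_{1\leq i<j\leq n}\tfrac{z_i-z_j}{z_i-z_j-1}\bigr)\prod_{k}e^{\frac{t}{2}z_k^2+x_{(k)}(t)z_k}$; the poles at $z_i=z_j+1$ that one crosses when collapsing the nested contours come from the $z_i-z_j-1$ in that denominator. The symmetrized object $E_t$ you place on the nested contours is what emerges only \emph{after} the deformation, as the bookkeeping device that organizes the accumulated residues (it is the $E^F$ of Lemma~\ref{Lemma: Contour Formula 2}). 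Taking $\tfrac{1}{n!}\oint\cdots\oint E_t$ over nested contours as the starting point would in fact not produce the partition sum at all: each summand of $E_t$ has its only poles on $z_{\sigma(A)}=z_{\sigma(B)}$ (not on $z_{\sigma(A)}=z_{\sigma(B)}+1$, which is a \emph{zero} of the numerator $z_{\sigma(A)}-z_{\sigma(B)}-1$), and those apparent poles cancel across the sum over $\sigma$, so $E_t$ is analytic between the contours and the deformation picks up nothing. Conflating the pre- and post-deformation integrands, and misattributing where the poles lie, is the essential gap.

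The high-level mechanism you describe --- collapse the nested contours to a common line, organize residues into unit-spaced strings indexed by $\lambda\vdash n$, convert inter-string interactions into a Cauchy-type determinant, and absorb the overcounting of equal string lengths into $1/m(\lambda)$ --- is the right intuition and is exactly the content of Lemma~\ref{Lemma: Contour Formula 2}. But that lemma is the heart of the proposition, and the paper proves it by a nontrivial induction on $N$ (peeling off one contour at a time, tracking each crossed pole in the terms $U_{\lambda,k}$, and matching partitions of $n$ with those of $n+1$ through an explicit bijection). Calling this ``essentially routine'' underrepresents it, but the more basic issue is that the formula you are deforming, and your identification of its poles, are both incorrect.
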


Proposition \ref{prop:contexpan} is proved in Section \ref{Section: Main Technical}.

\begin{remark}
The use of contour integral formulas to study moment asymptotics of the SHE (and other models) is not new to this paper.
In particular, \cite{borodin2014moments, corwin2020kpz} use the same idea
to investigate the one-point moment with a fixed space coordinate (i.e., the setting $x_1(t) = \cdots = x_n(t)=x$). In this scenario, the expression
\eqref{eq:E} in the contour integral formula is significantly simplified due to the symmetry with respect to the components $z_k$;
see, e.g., \cite[Proposition 6.2.7]{borodin2014macdonald}.
\end{remark}

With this in hand, Theorem \ref{thm:main}
relies on noting that the asymptotic contribution of the one-element partition $\lambda=(n)$
to the sum \eqref{eq:SHEcontour} gives the leading order term in \eqref{Equation: Main},
and that all the other summands grow at a slower rate:

\begin{prop}
\label{Proposition: Leading Order}
As $t\to\infty$, it holds that
\[ \nu_{(n)}(t)=\frac{(n-1)! \sqrt{2\pi}}{\sqrt{nt}}\exp\left(\mathfrak L_nt\right)\Psi_n\big(x_1(t), \ldots,x_n(t)\big)\big(1+o(1)\big).\]
\end{prop}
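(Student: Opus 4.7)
The plan is to evaluate $\nu_{(n)}(t)$ directly from the definition in \eqref{eq:nu}. For the trivial partition $\lambda=(n)$ we have $\ell(\lambda)=1$, $m(\lambda)=1$, and the $1\times 1$ determinant in \eqref{eq:nu} collapses to $1/n$. Moreover $\vec w\circ (n)=(w,w+1,\ldots,w+n-1)$ is an arithmetic progression, so $\nu_{(n)}(t)$ reduces to the one-dimensional contour integral
\[
\nu_{(n)}(t)=\frac{1}{n}\oint_\gamma E_t(w,w+1,\ldots,w+n-1)\,\frac{dw}{2\pi\mathbf{i}}.
\]

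The decisive step is to simplify $E_t$ at this substitution. Setting $z_k=w+k-1$, every difference $z_A-z_B$ reduces to the integer $A-B$, so the Cauchy-type weight $\prod_{1\leq B<A\leq n}\frac{\sigma(A)-\sigma(B)-1}{\sigma(A)-\sigma(B)}$ becomes a pure function of $\sigma\in S_n$. The $(B,A)$-factor vanishes precisely when $\sigma(A)=\sigma(B)+1$, and I would check that this kills the contribution of every $\sigma$ other than the reverse permutation $\sigma_0(i)=n+1-i$; for $\sigma_0$ a short telescoping calculation gives the value $n!$. Consequently the sum over $S_n$ collapses to a single Gaussian-in-$w$ term and $\nu_{(n)}(t)=(n-1)!\oint_\gamma e^{Q_t(w)}\frac{dw}{2\pi\mathbf{i}}$ for an explicit quadratic $Q_t(w)=\frac{nt}{2}w^2+B(t)w+C(t)$ whose coefficients are read off from $\sum_i\bigl(\tfrac{t}{2}z_{\sigma_0(i)}^2+x_{(i)}(t)z_{\sigma_0(i)}\bigr)$.

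From there I would evaluate the Gaussian contour integral on $\gamma=\theta+\mathbf{i}\mathbb{R}$ by completing the square, or equivalently by shifting the contour through the saddle $w^\star=-(n-1)/2-X(t)/(nt)$ where $X(t):=\sum_i x_i(t)$. Standard Gaussian evaluation produces a prefactor of order $1/\sqrt{t}$, and the residual constants in the exponent need to be regrouped into the three pieces appearing in the claim. The $t$-dependent part reduces via the bookkeeping identity
\[
\frac{n(n-1)(2n-1)}{12}-\frac{n(n-1)^2}{8}=\frac{n(n^2-1)}{24}=\mathfrak{L}_n,
\]
and the linear-in-coordinate part collapses to $\sum_j\frac{n+1-2j}{2}x_{(j)}(t)$, which is precisely $\log\Psi_n(x_1(t),\ldots,x_n(t))$ after rewriting $\sum_{i<j}|x_i-x_j|=\sum_j(2j-n-1)x_{(j)}$. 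The remaining cross term, of the form $-X(t)^2/(2nt)$, is then absorbed into the $(1+o(1))$ factor using the strictly sublinear hypothesis $x_i(t)=o(t)$.

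The main obstacle is really the combinatorial collapse in the second paragraph: recognizing that under the substitution $z_k=w+k-1$ only the reverse permutation survives is the one genuinely nontrivial step. Everything downstream is a deterministic Gaussian computation together with the bookkeeping of constants.
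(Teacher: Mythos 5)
Your proposal is correct and follows essentially the same route as the paper's proof: reduce to the one-dimensional contour integral via $\ell((n))=1$ and $m((n))=1$, observe that under $z_k=w+k-1$ only the reverse permutation $\sigma_0(i)=n+1-i$ survives in $E_t$ (any $\sigma$ with $\sigma(\alpha)=\sigma(\beta)+1$ for some $\beta<\alpha$ gives a vanishing factor), evaluate the surviving Cauchy weight as $n!$ by telescoping, and then complete the square in the Gaussian exponent to extract $\mathfrak L_n t$, $\log\Psi_n$, and the $o(t)$ cross term $-X(t)^2/(2nt)$. The combinatorial collapse, the saddle location, and the bookkeeping identities all match the paper.
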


\begin{prop}
\label{Proposition: Remainder Terms}
For every $\lambda\vdash n$ such that $\lambda\neq(n)$, one has
\[v_\lambda(t)=o\big(v_{(n)}(t)\big)\qquad\text{as }t\to\infty.\]
\end{prop}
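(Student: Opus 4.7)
The strategy is to perform a saddle-point analysis of $\nu_\lambda(t)$ on the joint contour and compare the resulting exponential growth rate with that of $\nu_{(n)}(t)$ from Proposition \ref{Proposition: Leading Order}. Using the freedom to choose $\theta \in \mathbb{R}$ in Definition \ref{Def: Et and nut}, I shift the shared contour to $\theta = \theta^*_\lambda := -\sum_k \lambda_k(\lambda_k - 1)/(2n)$, the unique minimizer of the real-axis quadratic $Q_\lambda(\theta) := \sum_{k=1}^{\ell(\lambda)}\sum_{j=0}^{\lambda_k-1}(\theta + j)^2$. Denote $M_\lambda := \min_\theta Q_\lambda(\theta)$. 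Writing $w_k = \theta^*_\lambda + \mathbf{i}\tau_k$ and $z = \vec w \circ \lambda$, the exponential factor in $E_t(\vec w \circ \lambda)$ satisfies
\[\Bigl|\exp\Bigl(\tfrac{t}{2}\sum_i z_{\sigma(i)}^2 + \sum_i x_{(i)}(t) z_{\sigma(i)}\Bigr)\Bigr| \le \exp\bigl(\tfrac{t}{2}M_\lambda - \tfrac{t}{2}\sum_k \lambda_k \tau_k^2 + o(t)\bigr),\]
uniformly in $\sigma \in S_n$ and $\tau \in \mathbb{R}^{\ell(\lambda)}$, since $|\mathrm{Re}(z_i)| = O(n)$ and $\max_i|x_i(t)| = o(t)$.

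The key algebraic input is the strict inequality $M_\lambda < M_{(n)} = n(n^2-1)/12 = 2\mathfrak L_n$ whenever $\lambda \neq (n)$. A direct computation yields
\[M_\lambda = \frac{2\sum_k \lambda_k^3 - 3\sum_k \lambda_k^2 + n}{6} - \frac{(\sum_k \lambda_k^2 - n)^2}{4n},\]
and if $\tilde\lambda$ denotes the partition obtained from $\lambda$ by merging its two largest parts $\lambda_1, \lambda_2$ into a single part $\lambda_1 + \lambda_2$, this formula gives
\[M_{\tilde\lambda} - M_\lambda = \frac{\lambda_1\lambda_2}{n}\Bigl[\lambda_1\lambda_2 + \sum_{k\ge 3}\lambda_k(\lambda_1 + \lambda_2 - \lambda_k)\Bigr] \ge \frac{(\lambda_1\lambda_2)^2}{n} > 0,\]
where the inequality uses $\lambda_1 + \lambda_2 - \lambda_k \ge \lambda_1 \ge 1$ for $k \ge 3$ by non-increasingness of $\lambda$. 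Iterating these merges until the partition becomes $(n)$, we obtain $M_\lambda < M_{(n)}$ for every $\lambda$ with $\ell(\lambda) \ge 2$, so $\tfrac12 M_\lambda < \mathfrak L_n$ with a strictly positive gap $\delta_\lambda > 0$ depending only on $\lambda$.

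Next I bound the algebraic prefactor. The Cauchy-like determinant $\det[1/(w_i + \lambda_i - w_j)]$ has entries $1/(\lambda_i + \mathbf{i}(\tau_i - \tau_j))$, uniformly bounded by $1$ and displaying $|\tau_i - \tau_j|^{-1}$ decay off-diagonally. The sum $\sum_\sigma\prod_{B<A}\frac{z_{\sigma(A)} - z_{\sigma(B)} - 1}{z_{\sigma(A)} - z_{\sigma(B)}}$ is a finite sum of uniformly bounded ratios in $\tau$, whose apparent singularities at coincident $z$'s are removable by the standard Bethe-ansatz symmetrization (cf.\ \cite[Proposition 6.2.7]{borodin2014macdonald}); alternatively they may be avoided by an infinitesimal perturbation of $\theta^*_\lambda$. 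Integrating the resulting polynomial-in-$\tau$ bound against the Gaussian $\exp(-\tfrac{t}{2}\sum_k \lambda_k \tau_k^2)$ produces an $O(t^{-\ell(\lambda)/2})$ factor, yielding $|\nu_\lambda(t)| \le C t^{C'}\exp\bigl(\tfrac12 M_\lambda t + o(t)\bigr)$ for some constants $C, C'$ depending on $\lambda, n$.

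Combining with $\nu_{(n)}(t) \ge C_* t^{-1/2}\exp(\mathfrak L_n t)\Psi_n(x(t))$ from Proposition \ref{Proposition: Leading Order} and the trivial lower bound $\Psi_n(x(t)) \ge \exp(-\binom{n}{2}\max_i|x_i(t)|) = \exp(-o(t))$, we obtain $|\nu_\lambda(t)|/\nu_{(n)}(t) \le C'' t^{C'''}\exp(-\delta_\lambda t + o(t)) \to 0$, as required. The main obstacle is the algebraic-prefactor bound in the third step: one must carefully track the integrand's apparent singularities and verify absolute integrability against the Gaussian damping, with the Bethe-ansatz residue cancellation handled in the mixed-block setting appropriate to $\ell(\lambda) \ge 2$.
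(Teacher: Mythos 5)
Your proposal follows the same saddle-point / contour-shift strategy as the paper, and the computations of the optimal shift $\theta^*_\lambda = (n - \sum_k\lambda_k^2)/(2n)$ and the minimum $M_\lambda$ agree exactly with the paper's; the two proofs diverge in two places. First, for the key strict inequality $\tfrac12 M_\lambda < \mathfrak L_n$ (for $\lambda\neq(n)$), you use a "merge two parts and show $M_\lambda$ strictly increases" argument iterated down to $(n)$, whereas the paper bounds $4\sum\lambda_k^3 - \tfrac3n(\sum\lambda_k^2)^2$ by the $\ell^3\leq\ell^2$ inequality and optimizes the scalar $4r^3 - \tfrac3nr^4$. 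Both arguments are correct; yours gives a cleaner combinatorial picture and an explicit quantitative gap $\geq(\lambda_1\lambda_2)^2/n$ per merge, while the paper's is shorter. Second, and more substantively: you place all $\ell(\lambda)$ integration variables on the \emph{same} vertical line $\theta^*_\lambda+\mathbf i\mathbb R$, which reintroduces the singular denominators $z_{\sigma(A)}-z_{\sigma(B)}$ in $E_t(\vec w\circ\lambda)$ — when $w_k+p$ and $w_{k'}+q$ share the same real part (as happens whenever $p=q$ and $k\neq k'$), the ratios blow up near coinciding imaginary parts. You acknowledge this ("the main obstacle"), and offer two outs: cancellation of removable singularities after summing over $\sigma$, or an "infinitesimal perturbation." The first would require a nontrivial argument about the regularity of the $\vec w\circ\lambda$-restricted Bethe symmetrization (it is not the fully symmetric case covered by \cite[Proposition 6.2.7]{borodin2014macdonald}); the second is exactly what the paper does by deforming to the staggered contours $\gamma^\varepsilon_k = \theta + (k-1)\varepsilon + \mathbf i\mathbb R$ with $\varepsilon\in(0,\tfrac1{n-1})$, which makes all the algebraic prefactors uniformly bounded by a constant and eliminates the singularity question entirely, at the cost of a harmless $O(\varepsilon)$ shift in the exponent that is absorbed by the strict gap. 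To turn your proposal into a complete proof you should adopt the latter route explicitly rather than leaving both as options, since the removable-singularity claim is not established and is unnecessary once contours are separated.
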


Theorem \ref{thm:main} then readily follows from \eqref{eq:SHEcontour}.

\section{Contour Integral Representation - Proof of Proposition \ref{prop:contexpan}}
\label{Section: Main Technical}

The starting point of the proof of Proposition \ref{prop:contexpan}
is the following lemma, which is stated in \cite[Proposition 6.2.3]{borodin2014macdonald}
and can be proved by combining \cite[Proposition 5.4.8]{borodin2014macdonald} and \cite[Corollary 1.7]{Nica}:

\begin{lemma}
\label{Lemma: Contour Formula 1}
It holds that
\begin{equation*}
\mathbb{E}\left[\prod_{i = 1}^n Z\big(t,x_i(t)\big)\right] =\oint_{\gamma_1} \cdots \oint_{\gamma_n} \left(\prod_{1 \leq i < j \leq n} \frac{z_i - z_j}{z_i - z_j - 1}\right)\left(\prod_{k = 1}^n e^{\tfrac{t}{2} z_k^2 +x_{(k)}(t) z_k}\right) \prod_{i = 1}^n \frac{dz_i}{2\pi \mathbf{i}},
\end{equation*}
where the contour of $z_k$ is given by $\gamma_k=a_k + \mathbf{i} \mathbb{R}$ for some real numbers $a_1, \ldots, a_n$ that can be arbitrary as long as they satisfy $a_j - a_{j+1} > 1$ for $j = 1, \ldots, n-1$. 
\end{lemma}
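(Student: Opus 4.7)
My plan is to combine the two cited results along the route suggested by the lemma statement itself. The overall scheme is: invoke \cite[Proposition 5.4.8]{borodin2014macdonald}, which delivers a nested contour integral formula for the mixed moments of a discrete integrable model in the Macdonald process hierarchy, and then apply \cite[Corollary 1.7]{Nica}, which identifies the intermediate-disorder / weak-noise scaling limit of those moments with the multi-point moments of the SHE started from delta initial data.

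Carrying this out in order, I would first record the discrete formula from \cite[Proposition 5.4.8]{borodin2014macdonald}. This formula has exactly the structural form of the target: an $n$-fold integral on nested vertical contours $\gamma_k = a_k + \mathbf{i}\mathbb{R}$, with $a_j - a_{j+1} > 1$, over an integrand consisting of the Cauchy-type rational prefactor $\prod_{i<j}\frac{z_i-z_j}{z_i-z_j-1}$ multiplied by a product of single-variable kernels depending on $(t, z_k, x_{(k)})$ and a scaling parameter $\varepsilon$. I would then invoke \cite[Corollary 1.7]{Nica} to take $\varepsilon\to 0$: in that limit the discrete one-variable kernels converge to $e^{\tfrac{t}{2}z_k^2 + x_{(k)}(t)z_k}$ and the discrete moments converge to the SHE moments. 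Exchanging the limit with the nested contour integral yields the claimed identity.

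The main obstacle will be this interchange of limit and contour integration. The needed uniform integrability comes from Gaussian decay in $|\operatorname{Im} z_k|$ produced by $e^{\tfrac{t}{2}z_k^2}$ on the vertical contour $a_k + \mathbf{i}\mathbb{R}$, combined with uniform control of the Cauchy prefactor, whose singular set $\{z_i - z_j = 1\}$ stays a positive distance from the contours by the nesting assumption $a_j - a_{j+1} > 1$. A secondary technical point I would address carefully is the pairing of coordinates with contours: the left-hand side is symmetric in the $x_i(t)$'s, but the right-hand side is not, so one must track the identification $x_{(k)} \leftrightarrow z_k$ inherited from the discrete derivation, where the nondecreasing reordering $x_{(1)} \leq \cdots \leq x_{(n)}$ is paired with the decreasing contour sequence $a_1 > a_2 > \cdots > a_n$. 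Any permutation of this pairing would require deforming contours across the poles at $z_i - z_j = 1$ and would pick up residues that modify the integrand.
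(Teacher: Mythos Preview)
The paper does not actually prove this lemma: it merely records that the identity is stated as \cite[Proposition~6.2.3]{borodin2014macdonald} and remarks that it ``can be proved by combining \cite[Proposition~5.4.8]{borodin2014macdonald} and \cite[Corollary~1.7]{Nica}.'' Your proposal is precisely an attempt to flesh out that one-line citation, and the overall scheme you describe---start from an exact nested contour integral for moments of an integrable polymer model, then pass to the SHE via the intermediate-disorder limit supplied by Nica, justifying the exchange of limit and integration via the Gaussian decay of $e^{\frac{t}{2}z_k^2}$ on vertical lines and the uniform separation of the contours from the pole set $\{z_i-z_j=1\}$---is a correct and standard way to do it. Your remarks about the ordering convention $x_{(1)}\leq\cdots\leq x_{(n)}$ paired with $a_1>\cdots>a_n$ are also on point.

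One small caution: be careful not to over-specify what \cite[Proposition~5.4.8]{borodin2014macdonald} literally says. In the paper's introduction the same pair of references is cited for the fact that $u_n(t,x)$ solves the delta-Bose gas system, so an equally natural reading of ``combining'' these results is the PDE route: verify directly that the right-hand contour integral solves $\partial_t u = -H_n u$ with delta initial data (this is the content attributed to Borodin--Corwin), establish rigorously that the SHE mixed moments solve the same system (this is Nica's contribution), and conclude by uniqueness. Either route yields the lemma; just make sure your write-up matches what the cited propositions actually assert rather than a paraphrase of them.
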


With this in hand, Proposition \ref{prop:contexpan} is an immediate consequence of the following claim,
which is inspired by a similar result that was claimed in \cite[Theorem 7.7]{Corwin2018Exactly}
(see also \cite[Proposition 3.2.1]{borodin2014macdonald} for a full proof
of \cite[Theorem 7.7]{Corwin2018Exactly}
in the special case of a symmetric function):


\begin{lemma}
	\label{Lemma: Contour Formula 2}
	Let $N\in \mathbb{Z}_{\geq 1}$ be arbitrary, and let
	$\gamma_1,\ldots,\gamma_N$ be contours of the form $\gamma_k=a_k+\mathbf i\mathbb R$
	for some $a_k\in\mathbb R$ such that $a_j - a_{j+1} > 1$.
	If $F:\mathbb C^N\to\mathbb C$ is analytic between the contours $\gamma_k$, then
	\begin{multline*}
	\oint_{\gamma_{1}} \cdots \oint_{\gamma_{N}}\left(\prod_{1 \leq i < j \leq N} \frac{z_i - z_j}{z_i - z_j - 1}\right) F(z_1,\ldots,z_N) \prod_{i = 1}^N \frac{dz_i}{2\pi \mathbf{i}}\\
	=\sum_{\lambda\vdash N}\oint^{\otimes \ell(\lambda)}_{\gamma} \frac{1}{m(\lambda)} \det\Big[\frac{1}{w_i + \lambda_i  -w_j}\Big]_{i, j = 1}^{\ell(\lambda)}E^F(\vec{w}\circ \lambda)\prod_{i = 1}^{\ell(\lb)} \frac{dw_i}{2\pi \mathbf{i}},
	\end{multline*}
	where the contour $\gamma$ is given by $\theta + \mathbf{i} \mathbb{R}$ with arbitrarily fixed $\theta \in \mathbb{R}$, and
	we denote
\begin{equation*}
E^{F} (z_1, \ldots, z_N) := \sum_{\sigma \in S_N}\left( \prod_{1 \leq 
		B < A \leq N} \frac{z_{\sigma(A)} - z_{\sigma (B)} - 1}{z_{\sigma(A)} - z_{\sigma(B)}}\right)F\big(z_{\sigma(1)},\ldots,z_{\sigma(N)}\big) .
\end{equation*}
\end{lemma}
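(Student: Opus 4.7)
The plan is to adapt the residue-contraction argument of Borodin and Corwin in \cite[Proposition 3.2.1]{borodin2014macdonald} --- which handles the case of symmetric $F$ --- to the present non-symmetric setting, the non-symmetry of $F$ being absorbed into the symmetrizer $E^F$. The proof organizes residues by ``string partitions'' of $\{1, \ldots, N\}$.

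The first step is to deform the nested contours $\gamma_1, \ldots, \gamma_N$ onto the common contour $\gamma = \theta + \mathbf{i}\mathbb{R}$, one variable at a time. The hypothesis $a_i - a_{i+1} > 1$ ensures that during these deformations only the simple poles $z_i = z_j + 1$ (for $i<j$) of the kernel $\prod_{p<q}(z_p - z_q)/(z_p - z_q - 1)$ are crossed, and the assumed analyticity of $F$ between the original and common contours legitimizes each deformation. Each such residue forces $z_i$ to equal $z_j + 1$ in all remaining factors. Iterating the procedure, the original integral collapses into a sum of contributions where the variables $z_1, \ldots, z_N$ are grouped into \emph{strings}: subsets $\{i_1, \ldots, i_k\}$ constrained to take the values $w, w+1, \ldots, w+k-1$ for a shared base point $w$ integrated on the common contour $\gamma$.

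The second step is to reorganize this sum over set partitions of $\{1, \ldots, N\}$ into a sum over integer partitions $\lambda \vdash N$ of block sizes. For a fixed $\lambda$: (i) the number of orderings of equal-length strings is $m(\lambda)$, producing the prefactor $1/m(\lambda)$; (ii) the choice of which original label is inserted at which position within each string is parametrized by a permutation $\sigma \in S_N$, yielding precisely the symmetrization that defines $E^F(\vec{w} \circ \lambda)$; and (iii) the surviving inter-string residue weights must collapse onto the determinant $\det[1/(w_i + \lambda_i - w_j)]_{i,j=1}^{\ell(\lambda)}$ via a Cauchy-type determinantal identity.

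The main obstacle lies in step (iii): one must verify that, after all intra-string poles have been taken, the product of inter-string factors coming from pairs of strings of lengths $\lambda_i$ and $\lambda_j$ telescopes into a single $\ell(\lambda) \times \ell(\lambda)$ Cauchy-like determinant. The argument of \cite[Proposition 3.2.1]{borodin2014macdonald} establishes exactly this collapse in the symmetric case; here the essential new verification is that the non-symmetric $F$ is correctly replaced in the final formula by the symmetrization $E^F$. This replacement is natural because the residue procedure itself symmetrizes the contribution over all possible assignments of the original labels to positions within the strings, so the non-symmetric input and the $S_N$-symmetrized output match term-by-term after grouping by $\lambda$.
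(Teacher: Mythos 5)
Your proposal takes a genuinely different route from the paper. The paper proves the lemma by \emph{induction} on $N$: Step 1 applies the induction hypothesis to $(z_2,\ldots,z_{n+1})$, deforms only the single outer contour $\gamma_1$ across the poles at $\{w_i + \lambda_i\}_i$, and records the resulting contributions $U_{\lambda,k}$; Steps 2 and 3 then carry out the combinatorial bookkeeping that turns the double sum $\sum_{\lambda\vdash n}\sum_k U_{\lambda,k}$ into the single sum $\sum_{\bar\lambda\vdash n+1}\nu^F_{\bar\lambda}$, via an explicit bijection between pairs $(\lambda,i)$ and $(\bar\lambda,j)$ and a Cauchy-determinant expansion-by-row identity. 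Your proposal instead attempts an all-at-once residue contraction onto the common contour $\gamma$, organizing contributions by ``string'' set partitions and then regrouping by their size profiles $\lambda\vdash N$. The advantage of the paper's inductive route is precisely that it reduces the combinatorics to one pole-crossing at a time, making the matching of residue weights with the determinantal kernel tractable.

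However, your proposal has a genuine gap at the step you yourself flag as the ``main obstacle.'' You assert that the non-symmetric $F$ is ``correctly replaced in the final formula by the symmetrization $E^F$'' because ``the residue procedure itself symmetrizes the contribution over all possible assignments of the original labels to positions within the strings,'' but this is not an argument --- it is a restatement of what must be proved. In fact the residue procedure does not symmetrize over $S_N$: for a fixed set partition of $\{1,\ldots,N\}$ into strings, the nesting of the contours forces an essentially unique assignment of labels to positions within each string, so what you obtain is a sum over ordered set partitions with specific residue weights, not a symmetrization. The formula works instead because when $E^F$ is evaluated at the degenerate point $\vec w\circ\lambda$, the product $\prod_{B<A}\frac{z_{\sigma(A)}-z_{\sigma(B)}-1}{z_{\sigma(A)}-z_{\sigma(B)}}$ vanishes for most $\sigma\in S_N$; one must show that the surviving permutations are in weight-preserving bijection with the residue contributions for that $\lambda$, and that the accumulated inter-string factors assemble into the Cauchy-type determinant $\det[1/(w_i+\lambda_i-w_j)]$. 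Neither verification is carried out in your sketch, and both are nontrivial (the cited \cite[Proposition 3.2.1]{borodin2014macdonald} handles only the symmetric case, where every label assignment gives the same value of $F$ and the bookkeeping collapses). The paper's induction is precisely what makes these two verifications tractable, and your outline would need to supply equivalent content to be a complete proof.
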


\begin{proof}[Proof of Lemma \ref{Lemma: Contour Formula 2}]
	Let 
	\begin{equation}\label{eq:contourint}
	\mu_N^F := \oint_{\gamma_{1}} \cdots \oint_{\gamma_{N}}\left(\prod_{1 \leq i < j \leq N} \frac{z_i - z_j}{z_i - z_j - 1}\right) F(z_1,\ldots,z_N) \frac{dz_1}{2\pi \mathbf{i}}\cdots\frac{dz_N}{2\pi \mathbf{i}},
	\end{equation}
	and 
	\begin{equation}\label{eq:nuF}
	\nu^F_\lambda := \oint^{\otimes \ell(\lambda)}_{\gamma_{N}} \frac{1}{m(\lambda)} \det\Big[\frac{1}{w_i + \lambda_i  -w_j}\Big]_{i, j = 1}^{\ell(\lambda)}E^F(\vec{w}\circ \lambda)\prod_{i = 1}^{\ell(\lb)} \frac{dw_i}{2\pi \mathbf{i}},
	\end{equation}
	where we can take $\gamma=\gamma_N$.
	Therefore, it suffices to show that 
	\begin{equation*}
	\mu_N^F = \sum_{\lambda \vdash N} \nu_\lambda^F.
	\end{equation*} 
	We apply induction argument. It is clear that the desired equality holds when $N = 1$. Therefore, we only need to prove for every $n \in \mathbb{Z}_{\geq 1}$ that 
	\begin{equation}\label{eq:inducgoal}
	\mu^F_{n+1} = \sum_{\lambda \vdash n+1} \nu^F_{\lambda}
	\end{equation}
	knowing that the induction hypothesis holds, i.e.
	\begin{equation}\label{eq:ih} 
	\mu^F_{n} = \sum_{\lambda \vdash n} \nu^F_{\lambda}.
	\end{equation} 
	To prove \eqref{eq:ih}, we break up our proof into the following steps. 
	\bigskip
	\\
	\textbf{Step 1.}
	Let $G_{z_1} (z_2, \ldots, z_{n+1}) = F(z_1, \ldots, z_{n+1})$ and $g_{z_1} (z_2, \ldots, z_{n+1}) := \prod_{j = 2}^{n+1} \frac{z_1 - z_j}{z_1 - z_j - 1}$. By \eqref{eq:contourint}, it is straightforward to see that
	\begin{equation*} 
	\mu^F_{n+1} =  \oint_{\gamma_1} \frac{dz_1}{2\pi \mathbf{i}} \oint_{\gamma_2} \ldots \oint_{\gamma_{n+1}} \prod_{2 \leq i < j \leq n+1} \frac{z_i - z_j}{z_i - z_j - 1} (g_{z_1} G_{z_1}) (z_2, \ldots, z_{n+1}) \prod_{i = 2}^{n+1} \frac{dz_i}{2\pi \mathbf{i}}.
	\end{equation*}
	Applying \eqref{eq:ih} to the contour integral $ \oint_{\gamma_2} \ldots \oint_{\gamma_{n+1}}$, and then using the fact that $g_{z_1}$
	is symmetric to factor $E^{g_{z_1} G_{z_1}} = g_{z_1} E^{G_{z_1}}$, we obtain 
	\begin{equation*}
	\mu^F_{n+1} = \sum_{\lambda\vdash n}  \oint_{\gamma_1}  \bigg(\prod_{i  = 1}^{\ell(\lambda)} \frac{z_1 - w_i}{z_1 - w_i - \lambda_i}\bigg) \frac{dz_1}{2\pi \mathbf{i}} \oint_{\gamma_{n+1}}^{\otimes \ell(\lambda)} \frac{1}{m(\lambda)} \det\Big[\frac{1}{w_i + \lambda_i  -w_j}\Big]_{i, j = 1}^{\ell(\lambda)}  E^{G_{z_1}}(\vec{w} \circ \lambda)  \prod_{i = 1}^{\ell(\lambda)} \frac{dw_i}{2 \pi \mathbf{i}}.
	\end{equation*}
	Fix $\lambda$ and $w_1, \ldots, w_{\ell(\lambda)}$ and deform the contour of $z_1$ from $\gamma_1$ to $\gamma_{N+1}$. Doing so will cross simple poles at $\{w_i + \lambda_i\}_i$. By computing these residues, we get $U_{\lambda, 1}, \ldots, U_{\lambda, \ell(\lambda)}$ with 
	\begin{equation}\label{eq:U1}
	U_{\lambda, k} := \oint_{\gamma_{n+1}}^{\otimes \ell(\lambda)}  \bigg(\prod_{i = 1, i  \neq k }^{\ell(\lambda)} \frac{w_k + \lambda_k - w_i}{w_k + \lambda_k - w_i - \lambda_i}\bigg)  \frac{\lambda_k}{m(\lambda)} \det\Big[\frac{1}{w_i + \lambda_i  -w_j}\Big]_{i, j = 1}^{\ell(\lambda)} E^{G_{w_k + \lambda_k}} (\vec{w} \circ \lambda) \prod_{i = 1}^{\ell(\lambda)} \frac{dw_i}{2\pi \mathbf{i}}.
	\end{equation}
	When $\gamma_1$ arrives at $\gamma_{n+1}$ we let $z_1 = w_{\ell(\lambda) + 1}$ and obtain the contribution
	\begin{multline}\label{eq:U2}
	U_{\lambda, {\ell(\lambda) + 1}} :=   \oint_{\gamma_{n+1}}^{\otimes \ell(\lambda)+1} \bigg(\prod_{i  = 1}^{\ell(\lambda)} \frac{w_{\ell(\lambda) + 1} - w_i}{w_{\ell(\lambda) + 1} - w_i - \lambda_i}\bigg)\\\cdot\frac{1}{m(\lambda)} \det\Big[\frac{1}{w_i + \lambda_i  -w_j}\Big]_{i, j = 1}^{\ell(\lambda)}  E^{G_{w_{\ell(\lambda)+1}}} (\vec{w} \circ \lambda) 
	\prod_{i = 1}^{\ell(\lambda) + 1} 
	\frac{dw_i}{2\pi \mathbf{i}}.
	\end{multline}
	The above argument gives the identity 
	\begin{equation}\label{eq:step1end}
	\mu_{n+1}^F = \sum_{\lambda \vdash n} \sum_{k = 1}^{\ell(\lambda) + 1} U_{\lambda, k}.
	\end{equation}
	\textbf{Step 2.}
	We seek to rewrite the right hand side of \eqref{eq:step1end} into the right-hand side of \eqref{eq:inducgoal}. 
	Moreover, when $\lambda_i = \lambda_j$, we can obtain the integrand in $U_{\lambda, i}$ from that in $U_{\lambda, j}$ by swapping the variables $w_i$ and $w_j$. Since the contours for $w_i$ and $w_j$ are the same, we know that $U_{\lambda, i} = U_{\lambda, j}$.
	At this point, it is convenient for the purpose of this proof to use an alternate notation for partitions of $n$ and $n+1$,
	which is as follows:
	\begin{notation}
	We write any $\lambda\vdash n$ as $\lambda = a_1^{m_{a_1}} \ldots a_s^{m_{a_s}}$ where $a_1 > \ldots > a_s \geq 1$ and $m_{a_i}$ denotes the number of times that $a_i$ appears in $\lambda$. 
For any $\lb\vdash n+1$, we instead write $\lb = b_1^{m_{b_1}} \ldots b_r^{m_{b_r}}$ where $m_{b_i}$ has a similar meaning.  Furthermore, we let $M_{\lambda, k} = \sum_{i = 1}^k m_{a_i}$ and $M_{\lb, k} = \sum_{j = 1}^k m_{b_j}$.
	\end{notation}
Using this notation, we get that
	\begin{equation}\label{eq:Urelation}
	\sum_{\lambda \vdash n} \sum_{k = 1}^{\ell(\lambda)+1} U_{\lambda, k} = \sum_{\lambda \vdash n} \bigg(\Big(\sum_{i = 1}^s m_{a_i} U_{\lambda, M_{\lambda, i-1}+1}\Big) + U_{\lambda, \ell(\lambda)+1}\bigg).
	\end{equation}
	 Let us define a bijection $f$ from $\{(\lambda, i): \lambda \vdash n, i \in \{1, \ldots, s+1\}\}$ to $\{(\lb, j): \lb \vdash n+1, j \in \{1, \ldots, r\}\}$. Given $(\lambda, i)$, we define $(\lb, j) = f(\lambda, i)$ by the following rule: $\lb$ is defined via adding the $M_{\lambda, i-1}+1$-th component of $\lambda$ by $1$. $j$ is defined to be the unique number satisfying $M_{\lb, j} = M_{\lambda, i-1}+1$. It is clear that $f$ is invertible and $(\lambda, i) = f^{-1}(\lb, j)$ is given by the following rule: $\lambda$ is obtained from $\lb$ via subtracting the $M_{\lb, j}$-th component by $1$ and $i$ is defined to be the unique number satisfying $M_{\lambda, i-1} + 1 = M_{\lb, j}$. 
	
	For $j \in \{1, \ldots, r\}$, we let $\lb[k]$ be the vector obtained via subtracting the $k$-th component of $\lb$ by $1$. Note that $\lb[k]$ is a partition if and only if $k = M_{\lb, j}$ for some $j$. From the previous paragraph, it is not hard to verify that if $(\lb, j) = f(\lambda, i)$, then
	\begin{equation}\label{eq:mrelation}
	m_{a_i} = m_{b_{j+1}} \mathbf{1}_{\{b_j - 1 = b_{j+1}\}} + 1  \text{ when } i \in \{1, \ldots, s\}, \qquad  1 = m_{b_{j+1}} \mathbf{1}_{\{b_j - 1 = b_{j+1}\}}+1 \text{ when } i = s+1.
	\end{equation}
Moreover, one can verify that $M_{\lb, j} = M_{\lambda, i-1} + 1$ and $\lb[M_{\lb, j}] = \lambda$, which yields 
	\begin{equation*}
	U_{\lambda, M_{\lambda, i-1} + 1} = U_{\lb [M_{\lb, j}], M_{\lb, j}}.
	\end{equation*}
	The above equality and \eqref{eq:mrelation} imply that 
	\begin{equation*}
	\sum_{\lambda \vdash n} \bigg(\Big(\sum_{i = 1}^s m_{a_i} U_{\lambda, M_{\lambda, i-1} + 1}\Big) + U_{\lambda, \ell(\lambda)+1}\bigg) = \sum_{\lb \vdash n+1} \sum_{j = 1}^r (m_{b_{j+1}} \mathbf{1}_{\{b_j - 1 = b_{j+1}\}} + 1) U_{\lb [M_{\lb, j}], M_{\lb, j}}.
	\end{equation*}
	By the above equality and \eqref{eq:Urelation} and \eqref{eq:step1end}, we obtain 
	\begin{equation}\label{eq:step2end}
	\mu_{n+1}^F = \sum_{\lb \vdash n+1} \sum_{j = 1}^r (m_{b_{j+1}} \mathbf{1}_{\{b_j - 1 = b_{j+1}\}} + 1) U_{\lb [M_{\lb, j}], M_{\lb, j}}.
	\end{equation}
	\textbf{Step 3. } By \eqref{eq:step2end}, the proof of \eqref{eq:inducgoal} reduces to showing that for all $\lb \vdash n+1$, 
	\begin{equation}\label{eq:step3goal}
	\nu^F_{\lb} = \sum_{k = 1}^r (m_{b_{k+1}} \mathbf{1}_{\{b_k - 1 = b_{k+1}\}} + 1) U_{\lb [M_{\lb, k}], M_{\lb, k}}.
	\end{equation}
	The rest of the proof is devoted to proving \eqref{eq:step3goal}. For $j \in \{1, \ldots, n+1\}$, we define $\Lambda (k) := \sum_{i = 1}^k \lb_i$ and
	\begin{equation*}
	E^F_j (z_1, \ldots, z_{n+1}) := \sum_{\sigma \in S_{n+1}, \sigma(1) = j} \left( \prod_{1 \leq 
		B < A \leq n+1} \frac{z_{\sigma(A)} - z_{\sigma (B)} - 1}{z_{\sigma(A)} - z_{\sigma(B)}}\right)F\big(z_{\sigma(1)},\ldots,z_{\sigma(n+1)}\big).
	\end{equation*}
	Note that $E^F = \sum_{j = 1}^{n+1} E_j^F$ and $E^F_j (\vec{w} \circ \lb)$ is non-zero only if $j \in \{\Lambda (1), \ldots, \Lambda (\ell(\lb))\}$. By \eqref{eq:nuF}, we know that 
	\begin{equation*}
	\nu^F_{\lb} := \sum_{k = 1}^{\ell(\lb)}\oint^{\otimes \ell(\lambda)}_{\gamma_{n+1}} \frac{1}{m(\lb)} \det\Big[\frac{1}{w_i + \lb_i  -w_j}\Big]_{i, j = 1}^{\ell(\lb)}E_{\Lambda(k)}^F(\vec{w}\circ \lb)\prod_{i = 1}^{\ell(\lb)} \frac{dw_i}{2\pi \mathbf{i}}.
	\end{equation*}
	For $k \in \{1, \dots, \ell(\lb)\}$, define
	\begin{equation*}
	f_{\lb, k} (\vec{w}) := \prod_{i = 1, i\neq k}^{\ell(\lb)} \frac{w_k + \lb_k - w_i}{w_k + \lb_k - w_i - \lb_i}. 
	\end{equation*} 
It is straightforward to verify that 
	\begin{align*}
	E_{\Lambda(k)}^F(\vec{w}\circ \lb)=  \lb_k f_{\lb, k} (\vec{w}) E^{G_{w_k + \lb_k - 1}} (\vec{w} \circ \lb[k]). 
	\end{align*} 
	Therefore, we have 
	\begin{equation*}
	\nu_{\lb}^F = \sum_{k = 1}^{\ell(\lb)} \oint_{\gamma_{n+1}}^{\otimes \ell(\lb)} \frac{\lb_k}{m(\lb)} f_{\lb, k} (\vec{w}) \det\Big[\frac{1}{w_i + \lb_i - w_j}\Big]_{i, j = 1}^{\ell(\lb)} E^{G_{w_k + \lb_k - 1}} (\vec{w} \circ \lb[k]) \prod_{i = 1}^{\ell(\lb)} \frac{dw_i}{2\pi \mathbf{i}}. 
	\end{equation*}
	One can see that if $\lb_i = \lb_j$ for some $i, j \in \{1, \ldots, \ell(\lb)\}$, then $i$-th integrand on the right hand side above can be obtained by swapping the variables $w_i$ and $w_j$ in the $j$-th integrand. As a consequence, the $i$-th integral in the summation above is equal to the $j$-th integral. 
	Merging the term with the same values in the summation, we have 
	\begin{equation}\label{eq:nuexpansion}
	\nu_{\lb}^F = \sum_{k = 1}^r \oint_{\gamma_{n+1}}^{\otimes \ell(\lb)} \frac{m_{b_k}}{m(\lb)} \lb_{M_{\lb, k}} f_{\lb, M_{\lb, k}}(\vec{w}) \det\Big[\frac{1}{w_i + \lb_i - w_j}\Big]_{i, j = 1}^{\ell(\lb)} E^{G_{w_{M_{\lb, k} + \lb_{M_{\lb, k}} - 1}}} (\vec{w} \circ \lb[M_{\lb, k}]) \prod_{i = 1}^{\ell(\lb)} \frac{dw_i}{2\pi \mathbf{i}}.
	\end{equation} 
	It is standard to check that 
	\begin{equation*}
	\frac{m_{b_k}}{m(\lb)} = \frac{m_{b_{k+1}} \mathbf{1}_{\{b_k - 1 = b_{k+1}\}} + 1}{m(\lb[M_{\lb, k}])} \text{ and } \lb_{M_{\lb, k}} - 1 = \lb[M_{\lb, k}]_{M_{\lb, k}}.
	\end{equation*}
	This, together with \eqref{eq:nuexpansion} imply that
	\begin{align}\label{eq:step3nu}
	\nu^F_{\lb} = \sum_{k = 1}^r (m_{b_{k+1}} \mathbf{1}_{\{b_k - 1 = b_{k+1}\}} + 1) &\oint_{\gamma_{n+1}}^{\ell(\lb)} \frac{1}{m(\lb[M_{\lb, k}])} \lb_{M_{\lb, k}} f_{\lb, M_{\lb, k}} (\vec{w}) \det\Big[\frac{1}{w_i + \lb_i - w_j}\Big]_{i, j = 1}^{\ell(\lb)}\\ 
	\notag
	&\hspace{1em} \times E^{G_{w_k + \lb[M_{\lb, k}]_{M_{\lb, k}}}} (\vec{w} \circ \lb[M_{\lb, k}]) \prod_{i = 1}^{\ell(\lb)} \frac{dw_i}{2\pi \mathbf{i}}.
	\end{align} 
	To prove \eqref{eq:step3goal}, it suffices to show that for $k \in \{1, \ldots, r\}$,
	\begin{equation*}
	\text{The $k$-th integral in \eqref{eq:step3nu}} = U_{\lb [M_{\lb, k}], M_{\lb, k}}.
	\end{equation*}
	It suffices to match the integrands in the integral on both sides. If $\lb_{M_{\lb, k}} > 1$, then $U_{\lb [M_{\lb, k}], M_{\lb, k}}$ takes the form of \eqref{eq:U1}. 
	By Cauchy determinant formula, one can verify that 
	\begin{align}\notag
	&\lb_{M_{\lb, k}} f_{\lb, M_{\lb, k}} (\vec{w}) \det\Big[\frac{1}{w_i + \lb_i - w_j}\Big]_{i, j = 1}^{\ell(\lb)}\\ 
\notag
	&=  \lb[M_{\lb, k}]_{M_{\lb, k}} \det\Big[\frac{1}{w_i + \lb[M_{\lb, k}]_i  -w_j}\Big]_{i, j = 1}^{\ell(\lb[M_{\lb, k}])} \prod_{i = 1, i \neq M_{\lb, k}}^{\ell(\lb[M_{\lb, k}])} \frac{w_{M_{\lb, k}} + \lb[M_{\lb, k}]_{M_{\lb, k}} - w_i}{w_{M_{\lb, k}} + \lb[M_{\lb, k}]_{M_{\lb, k}} - w_i - \lb[M_{\lb, k}]_i}.
	\end{align} 
	Applying this equality to the integrand in \eqref{eq:step3nu}, we see that the integrand in \eqref{eq:step3nu} is equal to that in $U_{\lb[M_{\lb, k}], M_{\lb, k}}$. If instead $\lb_{M_{\lb, k}} = 1$, then $U_{\lb [M_{\lb, k}], M_{\lb, k}}$ takes the form of \eqref{eq:U2}, a similar argument concludes the matching of the integrands. 
\end{proof}


%

\section{Leading Order Term - Proof of Proposition \ref{Proposition: Leading Order}}

By definition of $\nu_\lambda(t)$ in \eqref{eq:nu}, if we take $\lambda=(n)$
and make the choice $\theta=0$, then we have that
\begin{align}
\label{Equation: Leading Order Term 1}
\nu_{(n)}(t)=\oint_{\mathbf i\mathbb R} \frac{1}{1!}\cdot\frac{1}{n}\cdot E_t(w,w+1,\ldots,w+n-1)\,\frac{dw}{2 \pi \mathbf{i}}.
\end{align}
Recall the definition of $E_t$ in \eqref{eq:E}.
If a permutation $\sigma\in S_n$ is such that
there exists indices $1\leq \beta<\alpha\leq n$ with $\sigma(\alpha)=\sigma(\beta)+1$,
then the product
\[\prod_{1 \leq B < A \leq n} \frac{z_{\sigma(A)} - z_{\sigma (B)} - 1}{z_{\sigma(A)} - z_{\sigma(B)}}\]
contains the factor
\[z_{\sigma(\alpha)} - z_{\sigma (\beta)} - 1=z_{\sigma(\beta)+1} - (z_{\sigma (\beta)} + 1).\]
Any such term vanishes if we take
\[(z_1,\ldots,z_n)=(w,w+1,\ldots,w+n-1),\]
as $z_{k+1}=z_k+1$ for all $1\leq k\leq n-1$, including $k=\sigma(\beta)$.
Thus, the only permutations that can contribute to
\[E_t(w,w+1,\ldots,w+n-1)\]
in the sum \eqref{eq:E} are those such that $\sigma(\alpha)\neq\sigma(\beta)+1$ whenever $\beta<\alpha$.
The only partition that satisfies this property is $\sigma(i)=n+1-i$, $1\leq i\leq n$.
With this particular choice of permutation, we note that
\[\frac1n\prod_{1 \leq B < A \leq n} \frac{z_{\sigma(A)} - z_{\sigma(B)} - 1}{z_{\sigma(A)} - z_{\sigma(B)}}
=\frac1n\prod_{1 \leq B < A \leq n}\frac{z_{n+1-A} - (z_{n+1-B} + 1)}{z_{n+1-A} - z_{n+1-B}},\]
and
\[\exp\left(\sum_{i  =1}^n\left(\frac{t}{2} z_{\sigma(i)}^2 + x_{(i)}(t) z_{\sigma(i)}\right)\right)
=\exp\left(\sum_{i  =1}^n\left(\frac{t}{2} z_{n+1-i}^2 + x_{(i)}(t) z_{n+1-i}\right)\right).\]
With the choice of coordinates $z_i=w+i-1$, this now becomes
\[\frac1n\prod_{1 \leq B < A \leq n} \frac{z_{\sigma(A)} - z_{\sigma(B)} - 1}{z_{\sigma(A)} - z_{\sigma(B)}}
=\frac1n\prod_{1 \leq B < A \leq n}\frac{1+(A-B)}{A-B}=(n-1)!\]
and by expanding and completing the square,
\begin{multline*}
\exp\left(\sum_{i  =1}^n\left(\frac{t}{2} z_{\sigma(i)}^2 + x_{(i)}(t) z_{\sigma(i)}\right)\right)
=\exp\left(\sum_{i  =1}^n\left(\frac{t}{2} (w+n-i)^2 + x_{(i)}(t)\big(w+n-i\big)\right) \right)\\
=\exp\left(\left(\frac{n(n^2-1)t}{24}+\sum_{i=1}^nx_{(i)}(t)\left(\frac{n+1}{2}-i\right)-\frac{(\sum_{i=1}^nx_{(i)}(t))^2}{2nt}+\frac{nt}{2} \left(w-\left(\frac{1-n}{2}-\frac{\sum_{i=1}^nx_{(i)}(t)}{nt}\right)\right)^2\right)\right).
\end{multline*}
At this point, we note that $\exp(\frac{n(n^2-1)t}{24})=\exp(\mathfrak L_n t)$,
\[\exp\left(\sum_{i=1}^nx_{(i)}(t)\left(\frac{n+1}{2}-i\right)\right)
=\exp\left(\sum_{1\leq i<j\leq n}\frac{x_{(j)}(t)-x_{(i)}(t)}2\right)
=\Psi\big(x_1(t),\ldots,x_n(t)\big),\]
and $-\frac{1}{2nt}(\sum_{i=1}^nx_{(i)}(t))^2=o(t)$ because $x_i(t)=o(t)$.
Thus, by computing a straightforward Gaussian integral (with variance $\frac1{nt}$),
the integral in \eqref{Equation: Leading Order Term 1} yields
\begin{align*}
\nu_{(n)}(t)=\frac{(n-1)! \sqrt{2\pi}}{\sqrt{nt}}\exp\left(\mathfrak L_nt\right)\Psi_n\big(x_1(t), \ldots,x_n(t)\big)\big(1+o(1)\big)\qquad
\text{as }t\to\infty
\end{align*}
as desired.

\section{Remainder Terms - Proof of Proposition \ref{Proposition: Remainder Terms}}

In the argument that follows, we use $C,c>0$ to denote positive constants independent of $t$
(but which may depend on some other parameters, such as $n$)
whose values may change from one display to the next.
Moreover, we use $\Re$ and $\Im$ to respectively denote the real and imaginary parts of
a complex number.

Let $\lambda\vdash n$ be a fixed partition such that $\lambda\neq(n)$.
Our aim is to control the moduli of the functions whose products appear inside the integral \eqref{eq:nu} individually
(i.e., the determinant and the multiple products in $E_t(\vec w\circ\lambda)$), and thus obtain a result that grows
at a slower rate than $\nu_{(n)}(t)$ as $t\to\infty$. In this context, looking back at \eqref{eq:E},
we see that the terms that are the most difficult to control are those that appear in $E_t(\vec w\circ\lambda)$
due to the divisions by $z_{\sigma(A)}-z_{\sigma(B)}$. In order to get around this, we deform the $\ell(\lambda)$ contours
in \eqref{eq:nu} as follows:
\begin{equation}\label{eq:nu deformed}
\nu_\lambda(t)= \oint_{\gamma^\varepsilon_1}\cdots\oint_{\gamma^\varepsilon_{\ell(\lambda)}} \frac{1}{m(\lambda)} \det\Big[\frac{1}{w_i + \lambda_i  -w_j}\Big]_{i, j = 1}^{\ell(\lambda)}  E_t(\vec w\circ \lambda)\prod_{i = 1}^{\ell(\lb)} \frac{dw_i}{2\pi \mathbf{i}},
\end{equation}
where $\varepsilon\in(0,\frac1{n-1})$ is a fixed constant, $\gamma^\varepsilon_1=\gamma=\theta+\mathbf i\mathbb R$,
and for every $1<k\leq\ell(\lambda)$, we define $\gamma^\varepsilon_k=\theta+(k-1)\varepsilon+\mathbf i\mathbb R$.
We recall that $\theta\in\mathbb R$ can be chosen arbitrarily (as per Definition \ref{Def: Et and nut}). Moreover,
the restriction $\varepsilon<\frac1{n-1}$ ensures that no poles in the determinant are crossed when
deforming \eqref{eq:nu} into \eqref{eq:nu deformed} because $\theta+(k-1)\varepsilon<\theta+1\leq\theta+\lambda_i$ for any choice of
$\lambda$ and $1\leq k,i\leq\ell(\lambda)$; otherwise $\varepsilon$ can be chosen arbitrarily at this time.

With this in hand, we first note that since $|z^{-1}|\leq|\Re(z)|^{-1}$,
for every $\theta\in\mathbb R$ and $\varepsilon\in(0,\frac1{n-1})$, there exists some $C>0$ such that
\[\sup_{w_1\in\gamma^\varepsilon_1,\ldots,w_{\ell(\lambda)}\in\gamma^\varepsilon_{\ell(\lambda)}}\left|\frac{1}{m(\lambda)} \det\Big[\frac{1}{w_i + \lambda_i  -w_j}\Big]_{i, j = 1}^{\ell(\lambda)}\right|\leq C\]
and
\[\sup_{w_1\in\gamma^\varepsilon_1,\ldots,w_{\ell(\lambda)}\in\gamma^\varepsilon_{\ell(\lambda)}}\left|\prod_{1 \leq B < A \leq n}\frac{z_{\sigma(A)} - z_{\sigma (B)} - 1}{z_{\sigma(A)} - z_{\sigma(B)}}\right|\leq C\qquad\text{for all }\sigma\in S_n.\]
Secondly, given that $|e^{z}|=e^{\Re(z)}\leq e^{|\Re(z)|}$ and $\Re(z^2)=\Re(z)^2-\Im(z)^2$, for any $z\in\mathbb C$ and $1\leq i\leq n$,
\[\left|\exp\left(\tfrac{t}{2} z^2 + x_{(i)}(t) z\right)\right|
=\exp\left(\tfrac t2\Re(z^2)+x_{(i)}(t)\Re(z)\right)
\leq\exp\left(\tfrac t2\Re(z)^2-\tfrac t2\Im(z)^2+\mathsf m(t)|\Re(z)|\right),\]
where we denote
\begin{align}
\notag
\mathsf m(t)=\max\{|x_i(t)|:1\leq i\leq n\}=o(t).
\end{align}
Therefore, since the real parts of the components of $\vec w\circ\lambda$ in \eqref{Equation: w circ lambda}
are bounded for any fixed $\theta$ and $\varepsilon$,
there exists some $c>0$ such that $|\Re(z)|\leq c$ for all $z\in \vec w\circ\lambda$; hence
it suffices to prove that there exists a choice of $\theta\in\mathbb R$ and $\varepsilon\in(0,\frac1{n-1})$ such that
\begin{equation*}
e^{c\mathsf m(t)}\oint_{\gamma^\varepsilon_1}\cdots\oint_{\gamma^\varepsilon_{\ell(\lambda)}}\exp\left(\sum_{k = 1}^{\ell(\lambda)}\sum_{i = 1}^{\lambda_k} \frac t2\Re(w_k+i-1)^2-\frac t2\Im(w_k+i-1)^2\big) \right)\prod_{i = 1}^{\ell(\lb)} \frac{dw_i}{2\pi \mathbf{i}}=o\big(v_{(n)}(t)\big)
\end{equation*}
as $t\to\infty$.
Given that
\[\Re(w_k+i-1)^2=\big(\theta+(k-1)\varepsilon+i-1\big)^2\qquad\text{and}\qquad\Im(w_k+i-1)^2=\Im(w_k)^2,\]
by a Gaussian integral
\begin{multline}\label{eq:nu deformed 2}
e^{c\mathsf m(t)}\oint_{\gamma^\varepsilon_1}\cdots\oint_{\gamma^\varepsilon_{\ell(\lambda)}}\exp\left(\sum_{k = 1}^{\ell(\lambda)}\sum_{i = 1}^{\lambda_k} \frac t2\Re(w_k+i-1)^2-\frac t2\Im(w_k+i-1)^2\big) \right)\prod_{i = 1}^{\ell(\lb)} \frac{dw_i}{2\pi \mathbf{i}}\\
\leq \frac{C}{\sqrt t}\exp\left(c\mathsf m(t)+t\sum_{k = 1}^{\ell(\lambda)}\sum_{i = 1}^{\lambda_k} \frac{\big(\theta+(k-1)\varepsilon+i-1\big)^2}{2}\right).
\end{multline}
Thus, we need only prove that the right-hand side of \eqref{eq:nu deformed 2} is of order $o\big(v_{(n)}(t)\big)$
as $t\to\infty$ for any $C,c>0$.

Toward this end, we note that since $e^{c\mathsf m(t)},\Psi_n\big(x_1(t), \ldots,x_n(t)\big)=e^{o(t)}$, it suffices to find a combination
of $\theta\in\mathbb R$ and $\varepsilon\in(0,\frac1{n-1})$ such that
\[\sum_{k = 1}^{\ell(\lambda)}\sum_{i = 1}^{\lambda_k} \frac{\big(\theta+(k-1)\varepsilon+i-1\big)^2}{2}<\mathfrak L_n.\]
By expanding the square above, we get that
\[\sum_{k = 1}^{\ell(\lambda)}\sum_{i = 1}^{\lambda_k} \frac{\big(\theta+(k-1)\varepsilon+i-1\big)^2}{2}\leq \sum_{k = 1}^{\ell(\lambda)}\sum_{i = 1}^{\lambda_k} \frac{\big(\theta+i-1\big)^2}{2}+C\varepsilon\]
for some constant $C>0$ that depends on $\lambda$ and $\theta$, but is independent of $\varepsilon$.
Given that $\varepsilon$ can be taken arbitrarily small, it suffices to prove that there exists some $\theta\in\mathbb R$ such that
\[\sum_{k = 1}^{\ell(\lambda)}\sum_{i = 1}^{\lambda_k} \frac{\big(\theta+i-1\big)^2}{2}<\mathfrak L_n.\]
By Faulhaber's formula and the fact that $\sum_{k=1}^{\ell(\lambda)}\lambda_k=n$,
\begin{multline*}
\sum_{k = 1}^{\ell(\lambda)}\sum_{i = 1}^{\lambda_k} \frac{\big(\theta+i-1\big)^2}{2}
=\sum_{k = 1}^{\ell(\lambda)}\frac{\left(6\lambda_k\theta^2+6(\lambda_k^2-\lambda_k)\theta+2\lambda_k^3-3\lambda_k^2+\lambda_k\right)}{12}\\
=\frac{(6n\theta^2+6(\sum_{k=1}^{\ell(\lambda)}\lambda_k^2-n)\theta+2\sum_{k=1}^{\ell(\lambda)}\lambda_k^3-3\sum_{k=1}^{\ell(\lambda)}\lambda_k^2+n)}{12}.
\end{multline*}
By elementary calculus, it is easy to see that the above is minimized (with respect to $\theta$) at
\[\theta_\star=\frac{n-\sum_{k=1}^{\ell(\lambda)}\lambda_k^2}{2 n}.\]
With this particular choice, we the sum simplifies to
\[\sum_{k = 1}^{\ell(\lambda)}\sum_{i = 1}^{\lambda_k} \frac{\big(\theta_\star+i-1\big)^2}{2}=\frac{1}{24} \left(4\sum_{k=1}^{\ell(\lambda)}\lambda_k^3-\frac3n \left(\sum_{k=1}^{\ell(\lambda)}\lambda_k^2\right)^2-n\right).\]
Note that the above reduces to $\mathfrak L_n$ when $\lambda=(n)$; it now only remains to show that
any other choice of permutation yields a quantity that is strictly smaller, that is,
\[\max_{\lambda\vdash n,~\lambda\neq(n)}\left\{4\sum_{k=1}^{\ell(\lambda)}\lambda_k^3-\frac3n \left(\sum_{k=1}^{\ell(\lambda)}\lambda_k^2\right)^2\right\}<n^3\qquad\text{for every }n\geq1.\]
For this, we recall the elementary inequality $\|x\|_{\ell^p}\leq\|x\|_{\ell^2}$ for all $x\in\mathbb R^N$ and $p>2$, which is achieved only
if all components of $x$ except one are equal to zero. Since $\lambda\neq(n)$, the components $\lambda_k$ cannot all be equal, whence
\[\max_{\lambda\vdash n,~\lambda\neq(n)}\left\{4\sum_{k=1}^{\ell(\lambda)}\lambda_k^3-\frac3n \left(\sum_{k=1}^{\ell(\lambda)}\lambda_k^2\right)^2\right\}<
\max_{\lambda\vdash n,~\lambda\neq(n)}\left\{4\left(\sum_{k=1}^{\ell(\lambda)}\lambda_k^2\right)^{3/2}-\frac3n \left(\sum_{k=1}^{\ell(\lambda)}\lambda_k^2\right)^2\right\}
\leq\max_{r\in\mathbb R}\left\{4r^3-\frac3nr^4\right\}.\]
It is easily seen (using elementary calculus) that the maximum of the function on the right-hand side
is $n^3$, which is achieved at $r=n$. Thus, the proof of Proposition \ref{Proposition: Remainder Terms} (and therefore of Theorem \ref{thm:main}) is complete.

\bibliographystyle{alpha}
\bibliography{ref}
\end{document}